\newtheorem{assumption}{Assumption}
\newtheorem{lemma}{Lemma}
\newtheorem{remark}{Remark}
\newtheorem{theorem}{Theorem}
\journal{Applied Numerical Mathematics}
\begin{document}

\begin{frontmatter}
	
	

	\title{Supercloseness of the LDG method for a two-dimensional singularly perturbed convection-diffusion problem on Bakhvalov-type mesh \tnoteref{funding}}
	
	\tnotetext[funding]{
		NSFC grants 11771257 and 11601251 support this research.
	}
	
	\author[label1] {Chunxiao Zhang\fnref{cor1}}
	\author[label1] {Jin Zhang \corref{cor2}}
	\author[label1] {Wenchao Zheng \fnref{cor3}}
	\fntext[cor1] {Email: chunxiaozhangang@outlook.com }
	\cortext[cor2] {Corresponding author: jinzhangalex@hotmail.com}
	\fntext[cor3] {Email: Superwenchao@hotmail.com}
	
	\address[label1]{School of Mathematical Sciences, Shandong Normal University, Jinan 250014, Shandong Province, PR China}
	
	\begin{abstract}
		In this paper, we focus on analyzing the supercloseness property of a two-dimensional singularly perturbed convection-diffusion problem with exponential boundary layers. The local discontinuous Galerkin (LDG) method with piecewise tensor-product polynomials of degree $k$ is applied to Bakhvalov-type mesh. By developing special two-dimensional local Gauss-Radau projections and establishing a novel interpolation, supercloseness of an optimal order $k+1$ can be achieved on Bakhvalov-type mesh. It is crucial to highlight that this supercloseness result is independent of the singular perturbation parameter $\varepsilon$.
	\end{abstract}
	
	\begin{keyword}
		Supercloseness\sep Local discontinuous Galerkin\sep Singularly perturbed\sep Convection-diffusion\sep Bakhvalov-type mesh
		\MSC 65N15 \sep 65N30 
	\end{keyword}
	
\end{frontmatter}


%
%
%

\section{Introduction}
Take the following two-dimensional singularly perturbed problem into consideration:
\begin{equation}\label{eq1.1}
	\begin{aligned}
		-\varepsilon \Delta u+\boldsymbol{\alpha} \cdot \nabla u+bu=&f \quad &&\text{in $ \Omega =(0,1)^2$},\\
		u=&0 \quad &&\text{on $ \partial\Omega$},
	\end{aligned}
\end{equation}
where the coefficient $\varepsilon$ is a positive parameter, $\boldsymbol{\alpha}=\boldsymbol{\alpha}(x,y)$, $b=b(x,y)$ and $f=f(x,y)$ are sufficiently smooth and satisfy for $\forall(x,y)\in \bar{\Omega}$, 
$\boldsymbol{\alpha}(x,y)=(a_1(x,y),a_2(x,y))>(0,0),\quad b(x,y)-\frac{1}{2}\nabla \cdot \boldsymbol{\alpha} (x,y)>0
$. Under these assumptions, utilizing the Lax-Milgram lemma allows for a demonstration of the existence of a unique weak solution in $H_0^1(\Omega)\cap H_2(\Omega)$ for equation \eqref{eq1.1}. This solution typically includes two exponential layers of width $\mathcal{O}(\varepsilon\ln(1/\varepsilon))$ at $x=1$ and $y=1$.

Problem \eqref{eq1.1} can be viewed as a model for solving practical applications like the linearised Navier-Stokes equations at high Reynolds number \cite{Roo1Sty2:2008-Robust}. It is widely known that the solution often exhibits a boundary layer, which is characterized by the fact that, although $u$ remains bounded, its derivatives might be significantly large within narrow regions adjacent to the boundary $\partial\Omega$. This situation undermines the stability and the efficiency of traditional numerical methods. Even on adequately refined, layer-adapted meshes, oscillations may persist. Therefore, numerous stabilized and efficient numerical methods have been put forth in order to obtain accurate results (see \cite{Roo1Sty2:2008-Robust, Lin1:2010-Layer, Mil1ORi2:2012-Fitted}).

In this manuscript, we employ the local discontinuous Galerkin (LDG) method \cite{Coc1Shu2:1998-Local}, a type of finite element method known for its excellent stability and high order accuracy \cite{Xu1Shu2:2010-Local}. Owing to these characteristics, the LDG method is particularly suited for addressing singularly perturbed problems like \eqref{eq1.1}. In recent years, there has been a surge of literature related to the application of the LDG method, as exemplified by \cite{Xie1Zhang2:2009-Numerical, Zhu1Zhang2:2014-Uniform, Cheng1Mei2:2021-Analysis, Cheng1Mei2:2022-Local} and their respective references. 

Supercloseness is a crucial convergence property for the LDG method,  which denotes the difference between the LDG solution and a specific interpolation of the exact solution under certain norms. Investigations into supercloseness of the LDG method for singularly perturbed convection-diffusion problems have been conducted in \cite{Xie1Zhang2:2009-Numerical, Xie1Zhang2:2010-Uniform, Zhu1Cel2:2018-Nodal, Cheng1Jiang2:2022-Supercloseness}. Building upon these studies, researchers have derived optimal supercloseness results on a series of layer-adapted meshes, including Shishkin-type and Bakhvalov-type meshes. When considering the two-dimensional case, however, few results have been established \cite{Xie1Tu2:2014-Supercloseness, Cheng1Jiang2:2022-Supercloseness}, especially on Bakhvalov-type mesh. Difficulties arise from the construction of two-dimensional interpolation and the special width of the layer of Bakhvalov-type mesh.

For equation \eqref{eq1.1}, Cheng et al. \cite{Cheng1Jiang2:2022-Supercloseness} applied the LDG method to systematically analyze supercloseness under an energy norm on various types of meshes. They demonstrated an optimal order of $k+1$ on Shishkin-type mesh and Bakhvalov-Shishkin-type mesh. However, on Bakhvalov-type mesh, their supercloseness result was $\mathcal{O}(N^{-(k+1)}\ln(1/\varepsilon)^{1/2})$, affected by the factor $(\ln(1/\varepsilon))^{1/2}$. This influence is caused by the special layer width $\mathcal{O}(\varepsilon\ln(1/\varepsilon))$ of Bakhvalov-type mesh. From a numerical analysis perspective, removing the factor $(\ln(1/\varepsilon))^{1/2}$ and obtaining the optimal convergence results that are independent of $\varepsilon$ on Bakhvalov-type mesh are very interesting and challenging. In the field of the standard finite element method, Roos \cite{Roos1:2006-Error, Roos1Sty2:2015-Some} and Zhang \cite{Zhang1Liu2:2020-Optimal, Zhang1Liu2:2023-Supercloseness} have successively accomplished uniform convergence and supercloseness, unaffected by $\varepsilon$, on Bakhvalov-type mesh by employing diverse techniques. Correspondingly, in the field of the LDG method, it is necessary to prove supercloseness unaffected by $\varepsilon$ on Bakhvalov-type mesh, that is, eliminate the factor $(\ln(1/\varepsilon))^{1/2}$ in the supercloseness result. However, existing interpolation methods or analytical techniques have not been able to effectively tackle this challenge.

To address this research gap, we first design special two-dimensional local Gauss-Radau projections. Subsequently, we construct a novel interpolation to eliminate the intractable element errors. This new interpolation utilizes the characteristics of the special projections and the structure of Bakhvalov-type mesh. Further contributing to the realization of the optimal supercloseness result is the implementation of some improved analytical techniques. In conclusion, supercloseness of an optimal order $k+1$ under the energy norm can be achieved on Bakhvalov-type mesh, independent of the singular perturbation parameter $\varepsilon$. Our article successfully removes the factor $(\ln(1/\varepsilon))^{1/2}$ and proves a parameter-uniform supercloseness result on Bakhvalov-type mesh, indicating an effective analysis to tackle the issues associated with singularly perturbed convection-diffusion problems of the LDG method in 2D.

This paper is organized as follows: In Section \ref{S2}, we present some regularity results of the solution to problem \eqref{eq1.1}, describe Bakhvalov-type mesh, and introduce the LDG method. In Section \ref{S5}, special two-dimensional local Gauss-Radau projections are defined, and a novel interpolation is established. Besides, some preliminary results are also presented in this section. Finally, the supercloseness property under the energy norm is thoroughly demonstrated in Section \ref{S6}.

General constant $C$ is used in this paper, which is positive and unaffected by the singular perturbation $\varepsilon$ and the mesh parameter $N$.

\section{Regularity results, Bakhvalov-type mesh and the LDG method}\label{S2}
Throughout this article, let $k\ge1$ be a fixed positive integer.
\subsection{Regularity results}
\begin{assumption}\label{regularity results}
	For $\forall (x,y)\in \bar{\Omega}$, $u\in C^{k+2,\iota }(\Omega)$ with $0<\iota <1$ can be decomposed as
	$
	u = S + E_{21} +E_{12} + E_{22},
	$
	where $S$ is the smooth part, $E_{21}$, $E_{12}$ are the exponential layer parts, and $E_{22}$ is the corner layer part. In addition, the follwing inequalities hold:
	\begin{align*}
		&|\partial x^m \partial y^n S(x,y)|\le C,\\
		&|\partial x^m \partial y^n E_{21}(x,y)|\le C\varepsilon^{-m}e^{-\frac{\alpha_1 (1-x)}{\varepsilon}},\\
		&|\partial x^m \partial y^n E_{12}(x,y)|\le C\varepsilon^{-n}e^{-\frac{\alpha_2 (1-y)}{\varepsilon}},\\
		&|\partial x^m \partial y^n E_{22}(x,y)|\le C\varepsilon^{-(m+n)}e^{-\frac{\alpha_1 (1-x)+\alpha_2(1-y)}{\varepsilon}},
	\end{align*}
	for all integers $m$, $n$ with $0<m+n\le k+2$.
\end{assumption}

It is suggested to consult \cite{Zhu1Zhang2:2013} for further insight into this assumption. 
\subsection{Bakhvalov-type mesh}
We consider the Bakhvalov-type mesh introduced in \cite{Roos:2006-Error}, whose transition point is denoted as
$
\tau= \min\{1/2,\rho \varepsilon \ln{\frac{1}{\varepsilon}}\},
$
and the mesh generating function is described by
\begin{equation}\label{eq2.3}
	x=\phi (d)=
	\left\{
	\begin{split}
		& -  \rho \varepsilon \ln (-2(1-\varepsilon)d+1 ) \quad && d\in [0,1/2],\\
		&-t (-d+1)+1\quad && d\in (1/2,1].
	\end{split}
	\right.
\end{equation}
Here $t$
is used to guarantee the continuity of $\phi(1/2)$. For $i = 0, 1, \dots , N$,  we define the mesh points $x_i = \phi(i/N)$. 

The two-dimensional layer-adapted mesh $\{(x_i, y_j ), i,j=1,\ldots,N\}$ used in this paper is constructed by the tensor-product of the one-dimensional layer-adapted meshes in the horizontal and vertical directions, where $x_i$ is defined as \eqref{eq2.3} and $y_j$ is defined similarly, see Figure \ref{fig:paper4-figure}. For $i,j=1,\ldots,N$, set $I_i=(x_{i-1},x_i)$, $J_j=(y_{j-1},y_j)$ and $h_{x,i}=x_i-x_{i-1}$, $h_{y,j}=y_j-y_{j-1}$. Let 
$
\Omega_N=\{K_{ij}: K_{ij} = I_i\times J_j \quad i,j=1,\ldots,N\}
$
denote a rectangle partition of $\Omega$. And $K\in\Omega_N$ denotes a general mesh rectangle.

\begin{figure}[H]
	\centering
	\includegraphics[width=0.7\linewidth]{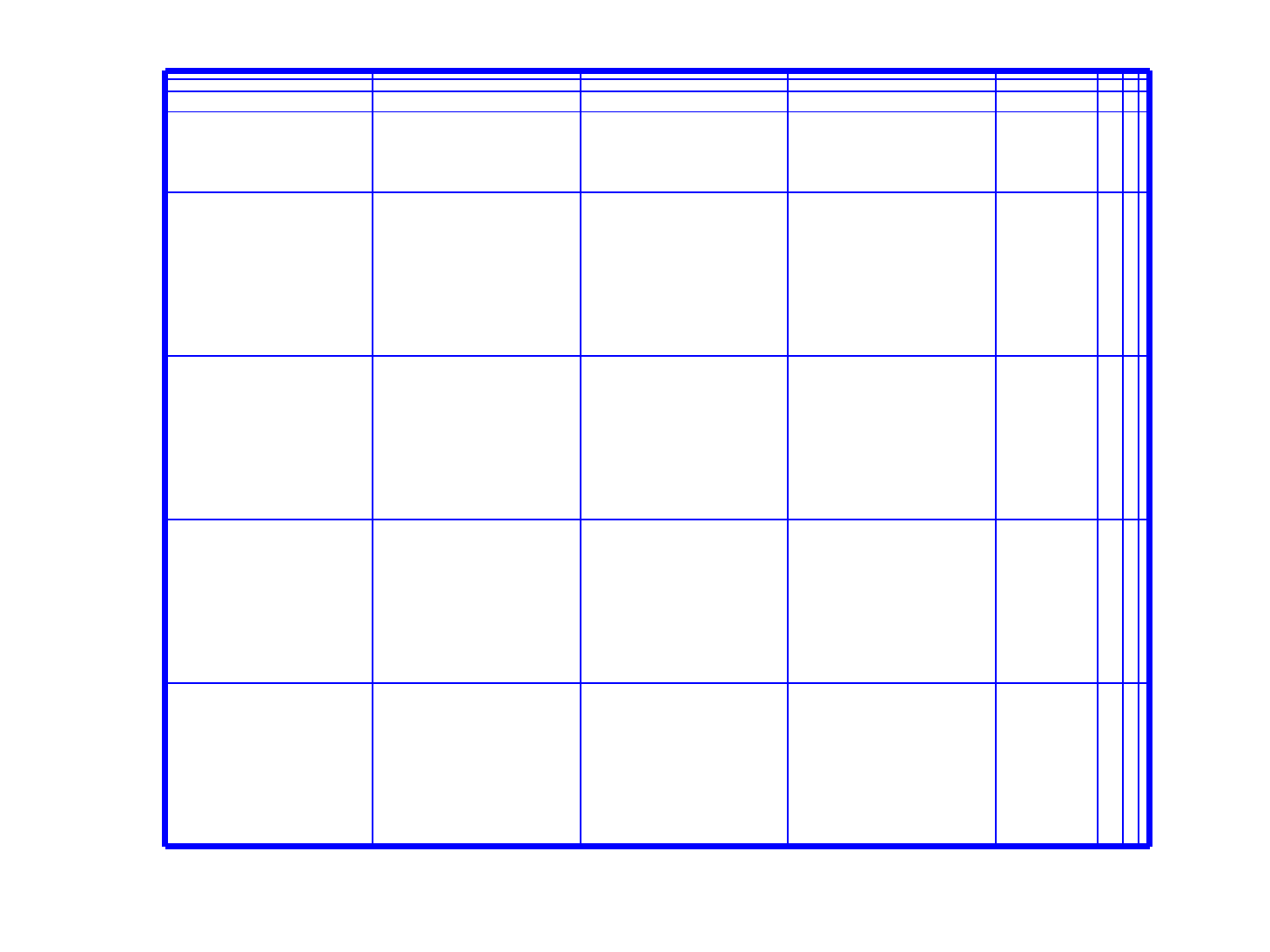}
	\caption{Bakhvalov-type mesh}
	\label{fig:paper4-figure}
\end{figure}

\begin{assumption}
In this paper, we assume that
\begin{enumerate}[(i)]
\item  $\rho\ge k+2$.
\item  $\varepsilon\le N^{-1}$.
\end{enumerate}
\end{assumption}

Then we will introduce some important properties of Bakhvalov-type mesh \eqref{eq2.3}.
\begin{lemma}\label{step}
	\begin{align*}
		&C_1\varepsilon N^{-1}\le h_{x,N}\le C_2\varepsilon N^{-1},\\
		&h_{x,N}\le h_{x,N-1}\le\dots\le h_{x,N/2+2},\\
		&1/4\rho\varepsilon\le h_{x,N/2+2}\le \rho\varepsilon,\\
		&1/2\rho\varepsilon\le h_{x,N/2+1}\le 2\rho N^{-1},\\
		&N^{-1}\le h_{x,i}\le 2N^{-1}\quad 1\le i\le N/2,\\
		&C_3\rho\varepsilon\ln N\le x_{N/2+1}\le C_4\rho\varepsilon\ln N,\quad x_{N/2}\ge C\varepsilon|\ln\varepsilon|.
	\end{align*}
	Bounds for $h_{y,j}$, $1\le j\le N$, are analogous.
\end{lemma}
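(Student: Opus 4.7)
The plan is to verify each estimate by direct computation from the formula $x_i = \phi(i/N)$, treating the graded logarithmic piece, the uniform linear piece, and the single transition step at $d=1/2$ separately. The continuity condition $\phi(1/2^-)=\phi(1/2^+)$ first determines $t = 2(1-\rho\varepsilon\ln(1/\varepsilon))$, and under the assumptions $\varepsilon \le N^{-1}$ and $\rho \ge k+2$ one checks that $t\in[1,2]$ for sufficiently large $N$; this immediately yields the uniform bound $N^{-1}\le h_{x,i}\le 2N^{-1}$ of the fifth inequality on the linear piece.

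For the graded piece I would apply the Mean Value Theorem to write $h_{x,i} = \phi'(\xi_i)/N$ for some $\xi_i\in((i-1)/N,i/N)$. Differentiating the log branch of $\phi$ gives
\[
\phi'(d) = \frac{2\rho\varepsilon(1-\varepsilon)}{1-2(1-\varepsilon)d},\qquad \phi''(d) = \frac{4\rho\varepsilon(1-\varepsilon)^2}{(1-2(1-\varepsilon)d)^2}>0,
\]
so $\phi'$ is strictly monotone on this piece, which at once delivers the ordering of step sizes in the second inequality. Evaluating $\phi'$ at the endpoint nearest the layer gives the $\mathcal{O}(\varepsilon/N)$ bound of the first inequality, while evaluating just inside the transition point produces the $\mathcal{O}(\rho\varepsilon)$ bound of the third inequality. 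For the mesh-point values in the sixth inequality, $x_{N/2}=\rho\varepsilon\ln(1/\varepsilon)$ is read off by substituting $d=1/2$ into the log branch (and then $\rho\ge k+2\ge 1$ gives the lower bound $C\varepsilon|\ln\varepsilon|$), and $x_{N/2+1}$ is obtained by substituting into the appropriate branch and expanding $\ln(\varepsilon + 2(1-\varepsilon)/N)$, using $\varepsilon\le N^{-1}$ to isolate the leading $\ln N$ factor.

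The main obstacle will be the fourth inequality, the two-sided bound on the transition step $h_{x,N/2+1}$: its lower bound is of layer scale $\rho\varepsilon$ while its upper bound is of coarse scale $N^{-1}$, so the step effectively bridges the graded and uniform regimes. I would compute it from the two explicit endpoint values $x_{N/2}$ and $x_{N/2+1}$ obtained above, expand the involved $\ln$ carefully near its singularity, and then group the $\rho$, $\varepsilon$ and $\ln N$ contributions using both $\varepsilon\le N^{-1}$ and $\rho\ge k+2$ to absorb stray logarithmic factors and keep the final constants independent of $\varepsilon$. The corresponding estimates for $h_{y,j}$ follow by identical arguments applied in the $y$-direction.
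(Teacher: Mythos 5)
The paper itself gives no proof of this lemma (the properties are quoted as standard facts about the Roos Bakhvalov-type mesh), so your direct-computation strategy --- using continuity to determine $t$, the monotonicity of $\phi'$ on the graded branch via $\phi''>0$, and explicit evaluation of the mesh points adjacent to the transition --- is the natural and essentially canonical route. Your formulas for $t$, $\phi'$ and $\phi''$ are correct for the branches as printed in \eqref{eq2.3}, and your plan for the transition step is sound: one finds $h_{x,N/2+1}=\rho\varepsilon\ln\bigl(1+2(1-\varepsilon)/(N\varepsilon)\bigr)$, and the two-sided bound follows from the elementary estimate $\ln 2\le u\ln(1+2/u)\le\ln 3$ with $u=N\varepsilon\in(0,1]$; no delicate expansion is required. (Note also that $\rho\ge k+2$ plays no role in any of these mesh-geometry bounds; it is only needed later for the interpolation error estimates.)

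There is, however, a genuine gap in how you match your computations to the stated inequalities. Taking \eqref{eq2.3} literally, the logarithmic grading sits on $d\in[0,1/2]$, i.e.\ on the cells $1\le i\le N/2$ adjacent to $x=0$, while the cells $N/2+1\le i\le N$ lie on the linear branch; hence $h_{x,N}=t/N\ge N^{-1}$, which contradicts the first inequality, and the ordering delivered by $\phi''>0$ is $h_{x,1}\le\dots\le h_{x,N/2}$, not the second inequality. Every estimate in the lemma (and the use of the mesh elsewhere in the paper, where the layers are at $x=1$, $y=1$ and the fine region is $[x_{N/2},1]$) presupposes the \emph{reflected} mesh: uniform cells for $1\le i\le N/2$ and graded cells for $N/2+1\le i\le N$ abutting the outflow boundary, e.g.\ $\phi(d)=td$ on $[0,1/2]$ and $\phi(d)=1+\rho\varepsilon\ln\bigl(1-2(1-\varepsilon)(1-d)\bigr)$ on $(1/2,1]$, with the quantities $x_{N/2+1}$, $x_{N/2}$ in the last line read as the distances $1-x_{N/2+1}$, $1-x_{N/2}$. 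Your step ``evaluating $\phi'$ at the endpoint nearest the layer gives the $\mathcal{O}(\varepsilon/N)$ bound of the first inequality'' is exactly where this surfaces: under the printed formula that endpoint is $d=0$ and the resulting small step is $h_{x,1}$, not $h_{x,N}$. You need to either correct the generating function (it is evidently misprinted relative to the lemma and to the location of the layers) and redo the endpoint evaluations on the reflected branch, or reindex $i\mapsto N-i$ throughout; as written, your argument proves the mirror image of the lemma rather than the lemma itself. Once the orientation is fixed, the remainder of your outline goes through.
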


\subsection{the LDG method}
Define the discontinuous finite element space:
$$\mathcal{V}_N=\{v\in L^2(\Omega):\text{$v|_{K}\in \mathbb{Q}^k(K)$ }, K\in \Omega_N \},$$ where $\mathbb{Q}^k(K)$ represents the space of tensor-product polynomials of degree $k$ in each variable on $K$. Note that the functions within the space $\mathcal{V}_N$ are permitted to be discontinuous at the interfaces of elements.

For $v \in \mathcal{V}_N$, $x\in I_i$ and $y \in J_j$, we use $v^{\pm}_{i,y}=\lim_{x\to x^{\pm}_i}v(x,y)$ and $v^{\pm}_{x,j}=\lim_{y\to y^{\pm}_j}v(x,y)$ to express the traces evaluated from the four directions. Besides, write the jumps on the vertical and horizontal edges as:
\begin{align*}
	&&&[[v]]_{i,y}=v^{+}_{i,y}-v^{-}_{i,y}\text{ for $i=1,2,\ldots,N-1$},\; &&[[v]]_{0,y}=v^{+}_{0,y},\;&[[v]]_{N,y}=-v^{-}_{N,y};\\ 
	&&&[[v]]_{x,j}=v^{+}_{x,j}-v^{-}_{x,j}\text{ for $j=1,2,\ldots,N-1$},\; &&[[v]]_{x,0}=v^{+}_{x,0},\;&[[v]]_{x,N}=-v^{-}_{x,N}.
\end{align*}

Next, we present the LDG method for problem \eqref{eq1.1}. Rewrite \eqref{eq1.1} by an equivalent first-order system:
\begin{equation}\label{eq2.2}
	\left\{
	\begin{aligned}
		&\varepsilon^{-1}p=u_x&&\quad \text{in $\Omega$}, \\
		&\varepsilon^{-1}q=u_y&&\quad \text{in $\Omega$}, \\
		&-p_x-q_y+a_1 u_x+a_2 u_y+bu=f&&\quad \text{in $\Omega$}.
	\end{aligned}
	\right.
\end{equation}
For $y\in J_j$,  $j=1,2,\ldots,N$, define the numerical fluxes by
\begin{equation*}
	\begin{aligned}
		&\hat{P}_{i,y}=
		\left\{
		\begin{aligned}
			& P^{+}_{0,y}+\lambda_1U^{+}_{0,y} \quad &&\text{$i=0$},\\
			& P^{+}_{i,y} \quad &&\text{$i=1,2,\ldots,N-1$},\\
			& P^{-}_{N,y}-\lambda_2U^{-}_{N,y} \quad &&\text{$i=N$},
		\end{aligned}
		\right. \label{eq:Bakhvalov mesh-Roos}\\
		&\hat{U}_{i,y}=
		\left\{
		\begin{aligned}
			& 0 \quad &&\text{$i=0,N$},\\
			& U^{-}_{i,y} \quad &&\text{$i=1,2,\ldots,N-1$},\\
		\end{aligned}
		\right.\\
	\end{aligned}
\end{equation*}
where  $0 \le \lambda_1,\lambda_2 \le C$. When $x\in I_i$, $i=1,2,\ldots,N$, we can define $\hat{Q}_{x,j}$ and $\hat{U}_{x,j}$ for $j=1,2,\ldots,N$ analogously. Let $(\cdot,\cdot)_{D}$ be the inner product in $L^2(D)$ with $D\subseteq \mathbb{R}^2$, and $\left< \cdot,\cdot\right>_{I}$ be the inner product in $L^2(I)$ with $I\subseteq \mathbb{R}$. $Z=(v,s,r)\in \mathcal{V}_N \times \mathcal{V}_N\times \mathcal{V}_N$ $(=:\mathcal{V}_N^3)$ denotes any test function. Then the compact form of the LDG method can be described as: 

Find the LDG solution $W=(U,P,Q)\in \mathcal{V}_N^3$ such that
\begin{equation*}
	B(W;Z)=(f,v)\quad \forall Z=(v,s,r)\in \mathcal{V}_N^3,
\end{equation*}
where
\begin{equation*}
B(W;Z):=\mathcal{B}_1(W;Z)+\mathcal{B}_2(W;Z)+\mathcal{B}_3(W;Z)+\mathcal{B}_4(U;v),
\end{equation*}
and
\begin{equation}\label{LDG2:2}
	\begin{aligned}
&\begin{aligned}
			\mathcal{B}_1(W;Z)=((b-\nabla \cdot \alpha) U,v)+\varepsilon^{-1}(P,s)+\varepsilon^{-1}(Q,r),
		\end{aligned}\\
&\begin{aligned}
		\mathcal{B}_2(W;Z)=(U,s_x)+\sum_{i=1}^{N-1}\sum_{j=1}^{N}\left<U^{-}_{i,y},[[s]]_{i,y}\right>_{J_j}
		+(U,r_y)+\sum_{i=1}^{N}\sum_{j=1}^{N-1}\left<U^{-}_{x,j},[[r]]_{x,j}\right>_{I_i},
	\end{aligned}\\
&\begin{aligned}
\mathcal{B}_3(W;Z)=&(P,v_x)+\sum_{j=1}^{N}\left\{ \sum_{i=0}^{N-1}\left<P^{+}_{i,y},[[v]]_{i,y}\right>_{J_j}-\left<P^{-}_{N,y},v_{N,y}^-\right>_{J_j} \right\}\\
&+(Q,v_y)+\sum_{i=1}^{N}\left\{ \sum_{j=0}^{N-1}\left<Q^{+}_{x,j},[[v]]_{x,j}\right>_{I_i}-\left<Q^{-}_{x,N},v_{x,N}^-\right>_{I_i} \right\},
\end{aligned}\\
&\begin{aligned}
\mathcal{B}_4(U;v)=&-(a_1 U,v_x)-\sum_{j=1}^{N}\left\{\sum_{i=1}^N\left<(a_1)_{i,y} U^{-}_{i,y},[[v]]_{i,y}\right>_{J_j}-\left< \lambda_1 U^{-}_{N,y},v^{-}_{N,y} \right>_{J_j}\right\}\\
&-(a_2 U,v_y)-\sum_{i=1}^{N}\left\{\sum_{j=1}^N\left<(a_2)_{x,j} U^{-}_{x,j},[[v]]_{x,j}\right>_{I_i}-\left< \lambda_2 U^{-}_{x,N},v^{-}_{x,N} \right>_{I_i}\right\}.
\end{aligned}
	\end{aligned}
\end{equation}

Let $w=(u,p,q)$ be the exact solution of \eqref{eq2.2}.
According to \eqref{LDG2:2}, we can define the energy norm as
\begin{equation}\label{energy norm}
	\begin{aligned}
	|||w|||_E^2=B(w;w)=& |||w|||_2^2+\sum_{j=1}^{N}\left\{\sum_{i=0}^{N-1}\frac{1}{2}\left<(a_1)_{i,y},[[u]]^2_{i,y}\right>_{J_j}+\left< \frac{1}{2}(a_1)_{N,y}+\lambda_1, [[u]]^2_{N,y} \right>_{J_j}\right\}\\ 
	&+\sum_{i=1}^{N}\left\{\sum_{j=0}^{N-1}\frac{1}{2}\left<(a_2)_{x,j},[[u]]^2_{x,j}\right>_{I_i}+\left< \frac{1}{2}(a_2)_{x,N}+\lambda_2, [[u]]^2_{x,N} \right>_{I_i}\right\},
\end{aligned}
\end{equation}
where $|||w|||_2^2=:\varepsilon^{-1}||p||^2+\varepsilon^{-1}||q||^2+||(b-\frac{1}{2}\nabla\cdot\boldsymbol{\alpha})^{\frac{1}{2}}u||^2$.

\section{Local Gauss-Radau projections, new interpolation and some preliminary results}\label{S5}
\subsection{Local Gauss-Radau projections}
In this part, we will design several two-dimensional local Gauss-Radau projections. For each $K_{ij}\in\Omega_N$, set $\mathcal{Q}^k(K_{ij}):=\mathcal{P}^k(I_i)\otimes\mathcal{P}^k(J_j)$, where $\mathcal{P}^k(I_i)$ represents the set of polynomials with degree $k$ defined on any one-dimensional interval $I_i$. $\forall z\in C(\overline{K}_{ij})$, we define the local Gauss-Radau projections $\pi-$, $\pi^{-}_x$, $\pi^{-}_y$, $\pi^{+}_x$, $\pi^{+}_y \in\mathcal{Q}^k(K_{ij})$ with $i,j=1,\cdots,N$ as:
\begin{equation*}
	\begin{aligned}
		&\left\{
		\begin{aligned}
			& (\pi^{-}z,v)_{K_{ij}}=(z,v)_{K_{ij}} \quad &&\text{$\forall v\in \mathcal{Q}^{k-1}(K_{ij})$},\\
			& \left<(\pi^{-} z)^{-}_{i,y},v\right>_{J_j}=\left<z^{-}_{i,y},v\right>_{J_j} \quad &&\forall v\in \mathcal{P}^{k-1}(J_j),\\
			& \left<(\pi^{-} z)^{-}_{x,j},v\right>_{I_i}=\left<z^{-}_{x,j},v\right>_{I_i} \quad &&\forall v\in \mathcal{P}^{k-1}(I_i),\\
			&(\pi^-z)(x_i^-,y_j^-)=z(x_i^-,y_j^-),
		\end{aligned}
		\right. \\
		&\left\{
		\begin{aligned}
			& (\pi^{-}_x z,v)_{K_{ij}}=(z,v)_{K_{ij}} \quad &&\text{$\forall v\in \mathcal{P}^{k-1}(I_i)\otimes\mathcal{P}^k(J_j)$},\\
			& \left<(\pi^{-}_x z)^{-}_{i,y},v\right>_{J_j}=\left<z^{-}_{i,y},v\right>_{J_j}\quad &&\forall v\in \mathcal{P}^{k}(J_j),
		\end{aligned}
		\right. \\
		&\left\{
		\begin{aligned}
			& (\pi^{-}_y z,v)_{K_{ij}}=(z,v)_{K_{ij}} \quad &&\text{$\forall v\in \mathcal{P}^{k}(I_i)\otimes\mathcal{P}^{k-1}(J_j)$} ,\\
			& \left<(\pi^{-}_y z)^{-}_{x,j},v\right>_{I_i}=\left<z^{-}_{x,j},v\right>_{I_i} \quad &&\forall v\in \mathcal{P}^{k}(I_i),
		\end{aligned}
		\right. 
	\end{aligned}
\end{equation*}
\begin{equation*}
	\begin{aligned}
		&\left\{
	\begin{aligned}
		& (\pi^{+}_x z,v)_{K_{ij}}=(z,v)_{K_{ij}} \quad &&\text{$\forall v\in \mathcal{P}^{k-1}(I_i)\otimes\mathcal{P}^k(J_j)$} ,\\
		& \left<(\pi^{+}_x z)^{+}_{i,y},v\right>_{J_j}=\left<z^{+}_{i,y},v\right>_{J_j} \quad &&\forall v\in \mathcal{P}^{k}(J_j),
	\end{aligned}
	\right. \\ 
	&\left\{
	\begin{aligned}
		& (\pi^{+}_y z,v)_{K_{ij}}=(z,v)_{K_{ij}} \quad &&\text{$\forall v\in \mathcal{P}^{k}(I_i)\otimes\mathcal{P}^{k-1}(J_j)$},\\
		& \left<(\pi^{+}_y z)^{+}_{x,j},v\right>_{I_i}=\left<z^{+}_{x,j},v\right>_{I_i} \quad &&\forall v\in \mathcal{P}^{k}(I_i).
	\end{aligned}
	\right. \\ 
	\end{aligned}
\end{equation*}

It can be deduced that $\pi^-, \pi^{-}_x, \pi^{-}_y,  \pi^{+}_x, \pi^{+}_y$ exist and are unique under these conditions \cite{Cas1Coc2:2002-Optimal}. Moreover, the projections $\Pi\in\left\lbrace\pi^-, \pi_x^-, \pi_y^-,\pi_x^+,\pi_y^+\right\rbrace$ possess the following stability properties and approximation properties as stated in \cite[Lemma 2.14]{Ape1:1999-Anisotropic}:
\begin{align*}
&\Vert\Pi z\Vert_{L^\infty(K_{ij})}\le C\Vert z\Vert_{L^\infty(K_{ij})},\\
&\Vert z-\Pi z\Vert_{K_{ij}}\le C\left[h_{x,i}^{k+1}\Vert\partial_x^{k+1}z\Vert_{K_{ij}}+h_{y,j}^{k+1}\Vert\partial_y^{k+1}z\Vert_{K_{ij}}\right].
\end{align*}

\subsection{New interpolation}
Then, establish the new interpolation $\pi w=(\mathcal{P}^-u, \mathcal{P}_x^+p, \mathcal{P}_y^+q)$ of the true solution $w=(u,p,q)$, where $\mathcal{P}^-u, \mathcal{P}_x^+p$ and $\mathcal{P}_y^+q$ are defined as:
\begin{equation}\label{new interpolation}
	\begin{aligned}
		&\mathcal{P}^{-} u=
		\left\{
		\begin{aligned}
			& \pi^{-}_x u\quad(x,y)\in [x_{N/2-1}, x_{N/2}]\times([0,y_{N/2-1}]\cup[y_{N/2},1]),\\
			& \pi^{-}_y u\quad(x,y)\in [y_{N/2-1}, y_{N/2}]\times([0,x_{N/2-1}]\cup[x_{N/2},1]),\\
			&\pi^-u\quad(x,y)\in {\text otherwise},
		\end{aligned}
		\right. \\
		&\mathcal{P}^{+}_x  p=
		\begin{aligned}
			\pi^{+}_x p\quad(x,y)\in\Omega,  \\
		\end{aligned}  \\
		&\mathcal{P}^{+}_y  q=
		\begin{aligned}
			\pi^{+}_y q\quad(x,y)\in\Omega.\\
		\end{aligned}
	\end{aligned}
\end{equation}

\begin{remark}
The supercloseness result estimated by the previously defined local Gauss-Radau projections $\pi^-$, $\pi_x^+$, $\pi_y^+$ and interpolation $(\pi^-u, \pi_x^+p, \pi_y^+q)$ (see \cite{Cheng1Mei2:2022-Local} and \cite{Cheng1Jiang2:2022-Supercloseness}) was affected by the factor $(\ln(1/\varepsilon))^{1/2}$. Here, we newly introduce two special local Gauss-Radau projections $\pi_x^-$ and $\pi_y^-$, and construct a new interpolation $(\mathcal{P}^-u, \pi_x^+p, \pi_y^+q)$ to further eliminate some intractable element errors. Utilizing this new interpolation, we succeed in eliminating the factor $(\ln(1/\varepsilon))^{1/2}$ and demonstrating parameter-uniform supercloseness of an optimal order $k+1$ in the following analysis.
\end{remark}
\subsection{Preliminary results}
In this part, we will introduce some preliminary conclusions. 

Define
\begin{equation*}
	\begin{aligned}
		&\eta=w-\pi w=(\eta_u,\eta_p,\eta_q)=(u-P^{-} u,p-P^{+}_x p, q-P^{+}_y q),\\
		&\xi=W-\pi w=(\xi_u,\xi_p,\xi_q)=(U-P^{-} u,P-P^{+}_x p, Q-P^{+}_y q),
 	\end{aligned}
\end{equation*}
where $w$ is the exact solution of \eqref{eq2.2}, $\pi w$ is the interpolation of $w$, and $W$ is the LDG solution. Then, the error $e=(u-U,p-P,q-Q)$ can be represented as
\begin{equation*}
	e=w-W=\eta-\xi.
\end{equation*}

In the following lemma, we will showcase some interpolation error estimates. Note that $\Vert\cdot\Vert_{L^{n}(D)}$ represents norms in the Lebesgue space. When $n=2$, $\Vert \cdot\Vert_{L^{2}(\Omega)}$ is written as $\Vert \cdot\Vert$ for simplicity.
\begin{lemma}\label{interpolation error estimates}
For $i,j=1,\dots,N$, there exists $C$ such that
\begin{align*}
&\Vert\eta_u\Vert_{L^\infty(\Omega)}\le CN^{-(k+1)},\\
&\Vert(\eta_u)_{i,y}^-\Vert_{L^\infty(K_{ij})}+\Vert(\eta_u)_{x,j}^-\Vert_{L^\infty(K_{ij})}\le CN^{-(k+1)},\\
&\Vert(\eta_u)_{i,y}^-\Vert_{J_j}+\Vert(\eta_u)_{x,j}^-\Vert_{I_i}\le CN^{-(k+3/2)},\\
&\sum_{j=1}^N\Vert(\eta_u)_{i,y}^-\Vert_{J_j}^2+\sum_{i=1}^N\Vert(\eta_u)_{x,j}^-\Vert_{I_i}^2\le CN^{-2(k+1)},\\
&\varepsilon^{-1/2}\Vert\eta_p\Vert+\varepsilon^{-1/2}\Vert\eta_q\Vert\le CN^{-(k+1)}.
\end{align*}
\end{lemma}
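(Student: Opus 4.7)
The plan is to exploit the decomposition $u = S + E_{21} + E_{12} + E_{22}$ of Assumption \ref{regularity results} together with the linearity of every local Gauss-Radau projection, and to estimate each piece on each element $K_{ij}$ by combining three ingredients: the anisotropic approximation bound
\[
\|z - \Pi z\|_{K_{ij}} \leq C\bigl(h_{x,i}^{k+1}\|\partial_x^{k+1} z\|_{K_{ij}} + h_{y,j}^{k+1}\|\partial_y^{k+1} z\|_{K_{ij}}\bigr),
\]
the $L^\infty$ stability $\|\Pi z\|_{L^\infty(K_{ij})} \leq C\|z\|_{L^\infty(K_{ij})}$ shared by each of $\pi^-, \pi_x^-, \pi_y^-, \pi_x^+, \pi_y^+$, and the derivative bounds from Assumption \ref{regularity results}, plus the step-size information from Lemma \ref{step}. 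Because all five projections enjoy the same two properties, the piecewise definition \eqref{new interpolation} of $\mathcal{P}^- u$ is harmless at the level of volume-type bounds, as long as each element is associated with a projection that is well-defined on it.

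For $S$ the bounds are immediate from $|\partial^{m+n} S| \leq C$ and $h_{x,i},h_{y,j} \leq CN^{-1}$. For the layer pieces I split according to location. On the fine-mesh column $i > N/2$ the bound $h_{x,i} \leq C\rho\varepsilon$ combines with $|\partial_x^{k+1} E_{21}| \leq C\varepsilon^{-(k+1)}e^{-\alpha_1(1-x)/\varepsilon}$ to give $h_{x,i}^{k+1}|\partial_x^{k+1} E_{21}|_{L^\infty} \leq C$, which after inserting the $y$-factor $h_{y,j}^{k+1}|\partial_y^{k+1}E_{21}| \leq CN^{-(k+1)}$ produces the desired $N^{-(k+1)}$; on the complementary coarse indices $i \leq N/2$ the key lever is the Bakhvalov assumption $\rho \geq k+2$ together with $\varepsilon \leq N^{-1}$, which through Lemma \ref{step} gives $e^{-\alpha_1(1-x)/\varepsilon} \leq e^{-\alpha_1(1-x_{N/2})/\varepsilon} \leq C\varepsilon^\rho \leq CN^{-(k+2)}$, and the $L^\infty$ stability of $\Pi$ then controls the projection error on these cells pointwise. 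The corner layer $E_{22}$ is handled by intersecting the two regimes, and bound (v) follows by applying the same analysis to $p = \varepsilon u_x$ and $q = \varepsilon u_y$, each extra $x$- or $y$-derivative on a layer piece reducing one power of $\varepsilon^{-1/2}$ in the weighted norm.

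The edge bounds (ii) and (iii) are obtained from the cellwise estimates by a trace/scaling argument: pointwise, $\|(\eta_u)^-_{i,y}\|_{L^\infty(J_j)} \leq C\|\eta_u\|_{L^\infty(K_{ij})}$, while the $L^2$ edge bound follows from the multiplicative trace inequality applied to $\eta_u$, combined with the volume bound for $\eta_u$ and the analogous bound for $\nabla\eta_u$. The truly delicate item is (iv): a naive trace estimate on the fine cells produces $\sum_{i > N/2} h_{x,i}^{-1}\|\eta_u\|_{K_{ij}}^2$, whose $h_{x,i}^{-1}$ factor threatens to reintroduce the logarithmic loss $\ln(1/\varepsilon)$ that the new interpolation was designed to avoid. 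The resolution, and the reason for introducing $\pi_x^-$ and $\pi_y^-$ in \eqref{new interpolation}, is that on the transition strip $[x_{N/2-1},x_{N/2}]\times\cdot$ the trace $(\pi_x^- u)^-_{N/2,y}$ matches $u(x_{N/2}^-,y)$ exactly against $\mathcal{P}^k(J_j)$, which lets one work directly with the pointwise estimate $\|(\eta_u)^-_{i,y}\|_{L^\infty(J_j)} \leq C h_{x,i}^{k+1}\varepsilon^{-(k+1)}e^{-\alpha_1(1-x_{i-1})/\varepsilon}$; then $\sum_j h_{y,j} \cdot [\cdots]^2$ is bounded by a single factor and summing over $i > N/2$ reduces to a geometric series whose ratio is $e^{-\alpha_1 h_{x,i}/\varepsilon}$, cleanly delivering $N^{-2(k+1)}$ without any logarithm. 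This telescoping step, rather than any single calculation, is the main obstacle of the proof.
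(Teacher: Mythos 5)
Your overall strategy --- decompose $u$ via Assumption \ref{regularity results}, exploit linearity, and combine the $L^\infty$-stability of the projections with the anisotropic approximation bound and Lemma \ref{step} --- is exactly the approach behind the paper's one-line proof, which simply defers to \cite[Lemma 4.1]{Cheng1Mei2:2022-Local}. But the execution has a genuine gap at the one place where the Bakhvalov mesh actually matters. The anisotropic bound is \emph{additive}, so on a fine cell $K_{ij}$ with $i>N/2$ your estimate $h_{x,i}^{k+1}\Vert\partial_x^{k+1}E_{21}\Vert_{L^\infty(K_{ij})}\le C$ (obtained from $h_{x,i}\le C\rho\varepsilon$) leaves the total error at $O(1)$; ``inserting the $y$-factor'' cannot upgrade the sum of an $O(1)$ term and an $O(N^{-(k+1)})$ term to $O(N^{-(k+1)})$. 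What is actually needed is the sharp mesh-characterizing inequality, roughly $h_{x,i}\le C\varepsilon N^{-1}e^{(1-x_i)/(\rho\varepsilon)}$ for $i\ge N/2+2$, which follows from the explicit generating function \eqref{eq2.3} and yields $h_{x,i}^{k+1}\varepsilon^{-(k+1)}e^{-\alpha_1(1-x_i)/\varepsilon}\le CN^{-(k+1)}$ precisely because $\rho\ge k+2$; this is the technical heart of the lemma and your write-up never establishes it. Moreover the transition cell $i=N/2+1$ falls through both of your cases: Lemma \ref{step} only gives $h_{x,N/2+1}\le 2\rho N^{-1}$, which can greatly exceed $\rho\varepsilon$, so there the approximation bound is useless and one must instead combine $L^\infty$-stability with the smallness $e^{-\alpha_1(1-x)/\varepsilon}\le e^{-\alpha_1(1-x_{N/2+1})/\varepsilon}\le CN^{-(k+2)}$, exactly as you correctly do on the coarse cells $i\le N/2$.

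Your reading of the fourth estimate is also off. The sums there run over a single index with the other fixed, so the bound follows in one line from the second (pointwise) estimate: $\sum_{j=1}^N\Vert(\eta_u)_{i,y}^-\Vert_{J_j}^2\le\max_j\Vert(\eta_u)_{i,y}^-\Vert_{L^\infty(J_j)}^2\sum_{j=1}^N h_{y,j}\le CN^{-2(k+1)}$. There is no $h_{x,i}^{-1}$ factor, no geometric series over $i>N/2$, and no logarithm to avoid; in particular the projections $\pi_x^-$ and $\pi_y^-$ play no role in this lemma --- they are introduced so that $(\eta_u)_{N/2,y}^-$ is weakly orthogonal to $\mathcal{P}^k(J_j)$ in the later analysis of $\mathcal{B}_{44}^x$ (Lemma \ref{estimate-B44}), and the interpolation lemma holds for the piecewise-defined $\mathcal{P}^-$ precisely because all five projections share identical stability and approximation properties. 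Likewise, the third estimate is a direct consequence of the second via $\Vert(\eta_u)_{i,y}^-\Vert_{J_j}\le h_{y,j}^{1/2}\Vert(\eta_u)_{i,y}^-\Vert_{L^\infty(J_j)}\le CN^{-(k+3/2)}$; the multiplicative trace inequality you propose would drag in $\Vert\nabla\eta_u\Vert$, which is not small in the layer region and is best avoided.
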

\begin{proof}
The aforementioned estimates can be analogously derived by consulting \cite[Lemma 4.1]{Cheng1Mei2:2022-Local}, which provides comparable methods and results. 
\end{proof}

Next, we will present a lemma proposed in \cite[Lemma 3.3]{Cheng1Jiang2:2022-Supercloseness} to bound $\xi$.
\begin{lemma}\label{a bound}
There exists a constant $C$ such that
\begin{align*}
&\Vert(\xi_u)_x\Vert_{[x_{N/2},1]\cup[0,1]}+\left(\sum_{i=N/2+1}^N\sum_{j=1}^N h_{x,i}^{-1}\Vert[[\xi_u]]_{i-1,y}\Vert_{J_j}^2\right)^{1/2}\le C\varepsilon^{-1/2}\left[|||\xi|||_E+N^{-(k+1)}\right],\\
&\Vert(\xi_u)_x\Vert_{[0,1]\cup[y_{N/2},1]}+\left(\sum_{i=1}^N\sum_{j=N/2+1}^N h_{y,j}^{-1}\Vert[[\xi_u]]_{x,j-1}\Vert_{I_i}^2\right)^{1/2}\le C\varepsilon^{-1/2}\left[|||\xi|||_E+N^{-(k+1)}\right].
\end{align*}
\end{lemma}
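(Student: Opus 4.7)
The plan is to exploit the Galerkin orthogonality of the LDG method together with the discrete counterpart of the auxiliary relation $\varepsilon^{-1}p=u_x$. Since both the exact solution $w$ and the LDG solution $W$ satisfy the variational formulation, we have $B(e;Z)=0$ for every $Z\in\mathcal{V}_N^3$, and writing $e=\eta-\xi$ this becomes $B(\xi;Z)=B(\eta;Z)$. Specialising to $Z=(0,s,0)$ with $s\in\mathcal{V}_N$ annihilates the $\mathcal{B}_3$ and $\mathcal{B}_4$ blocks and leaves
\[
\varepsilon^{-1}(\xi_p,s)+(\xi_u,s_x)+\sum_{i=1}^{N-1}\sum_{j=1}^N\langle(\xi_u)^{-}_{i,y},[[s]]_{i,y}\rangle_{J_j}=\varepsilon^{-1}(\eta_p,s)+(\eta_u,s_x)+\sum_{i=1}^{N-1}\sum_{j=1}^N\langle(\eta_u)^{-}_{i,y},[[s]]_{i,y}\rangle_{J_j}.
\]

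Element-wise integration by parts of $(\xi_u,s_x)$, combined with the $(\xi_u)^{-}_{i,y}$-jump couplings, converts this into the DG-canonical form in which $((\xi_u)_x,s)$ and the interior vertical jumps $\langle[[\xi_u]]_{i,y},s^{+}_{i,y}\rangle_{J_j}$ sit on the left, while $\varepsilon^{-1}(\xi_p-\eta_p,s)$, $(\eta_u,s_x)$, and the projection-trace terms sit on the right. A critical observation, and the very reason the special projections $\pi_x^-$ and $\pi_y^-$ enter the construction of $\mathcal{P}^-u$, is that on the transition columns the defining orthogonality of $\pi_x^-$ against $\mathcal{P}^{k-1}(I_i)\otimes\mathcal{P}^{k}(J_j)$ forces $(\eta_u,s_x)_K=0$ for every $s\in\mathcal{V}_N$ and simultaneously kills the right-hand trace $(\eta_u)^{-}_{x_{N/2},y}$ on the interface with the fine-mesh strip; this is precisely the cancellation that removes the $(\ln(1/\varepsilon))^{1/2}$ factor of the earlier analysis.

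To extract the bound I would pick $s$ supported in the strip $[x_{N/2},1]\times[0,1]$ and equal on each such element to $-(\xi_u)_x$ plus a local discrete lift on the vertical edges realising the weighted jump $h_{x,i}^{-1}[[\xi_u]]_{i-1,y}$, so that the left-hand side becomes a positive multiple of $\Vert(\xi_u)_x\Vert_{[x_{N/2},1]\times[0,1]}^2+\sum_{i\ge N/2+1}\sum_j h_{x,i}^{-1}\Vert[[\xi_u]]_{i-1,y}\Vert_{J_j}^2$. The right-hand side is then estimated by Cauchy--Schwarz using $\varepsilon^{-1/2}\Vert\xi_p\Vert\le|||\xi|||_E$, the interpolation bounds of Lemma \ref{interpolation error estimates} for $\eta_p$, $\eta_u$ and their traces, and standard inverse inequalities on $s$ that convert $s_x$- and $[[s]]$-norms back into $\Vert s\Vert$-type quantities with factors of $h_{x,i}^{-1/2}$; Young's inequality absorbs the resulting copies of the target left-hand side. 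The second inequality follows by the fully symmetric argument with the test function $Z=(0,0,r)$ on the strip $[0,1]\times[y_{N/2},1]$.

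The principal obstacle is the joint design of $s$: it must simultaneously generate the interior $L^2$ norm of $(\xi_u)_x$ and the $h_{x,i}^{-1}$-weighted jump sum on the left, while keeping its $s_x$- and trace-norms on the right controllable via inverse inequalities without spoiling the sharp $\varepsilon^{-1/2}$ factor that the layer width $h_{x,i}\sim\varepsilon N^{-1}$ otherwise forces. Balancing these anisotropic powers of $h_{x,i}$ against $\varepsilon$ and verifying that every projection-error contribution collapses to $N^{-(k+1)}$ without a logarithmic loss, so that the cancellations provided by $\pi_x^-$, $\pi_y^-$ can be fully cashed in, is the real technical heart of the proof.
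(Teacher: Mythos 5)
First, a point of comparison: the paper does not prove this lemma at all --- it is imported verbatim, with a citation, from \cite[Lemma 3.3]{Cheng1Jiang2:2022-Supercloseness}, where the interpolant is the plain $(\pi^-u,\pi_x^+p,\pi_y^+q)$. Your overall skeleton (test the $p$-block of the error equation with $Z=(0,s,0)$, integrate by parts element-wise, and build $s$ from $(\xi_u)_x$ plus an edge lift realising the $h_{x,i}^{-1}$-weighted jumps) is indeed the right one and matches the spirit of the cited proof. However, your central claimed mechanism is misattributed. By \eqref{new interpolation}, the special projections $\pi_x^-$, $\pi_y^-$ are used only on the strips $[x_{N/2-1},x_{N/2}]\times([0,y_{N/2-1}]\cup[y_{N/2},1])$ and the analogous transposed strip, which lie \emph{outside} the region $x\ge x_{N/2}$ over which the first estimate is taken; the cancellation $\left<(\eta_u)^-_{N/2,y},\cdot\right>_{J_j}=0$ that they provide is cashed in later, in Lemma~\ref{estimate-B44}, not here. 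Since the lemma holds already for the standard interpolant in the source paper, the new projections cannot be ``the very reason'' it is true, and they are not what removes the $(\ln(1/\varepsilon))^{1/2}$ factor either (that happens in Lemmas~\ref{estimate-B43} and~\ref{estimate-B44}).

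Second, and more seriously, the step in which you control $(\eta_u,s_x)$ by ``standard inverse inequalities \dots with factors of $h_{x,i}^{-1/2}$'' does not close. The inverse inequality for $s_x$ costs $h_{x,i}^{-1}$, not $h_{x,i}^{-1/2}$, and in the layer strip the steps decrease to $h_{x,i}\sim\varepsilon N^{-1}$ with $\sum_{i>N/2}h_{x,i}^{-1}\sim\varepsilon^{-1}N^2$. Combining this with $\Vert\eta_u\Vert_{L^\infty(\Omega)}\le CN^{-(k+1)}$ from Lemma~\ref{interpolation error estimates} gives only $|(\eta_u,s_x)|\le CN^{-(k+1)}\bigl(\sum_{i>N/2}h_{x,i}^{-1}\bigr)^{1/2}\Vert s\Vert\le CN^{-k}\varepsilon^{-1/2}\Vert s\Vert$, a full factor of $N$ worse than the target; integrating $\eta_u$ by parts instead fares no better because $\Vert(\eta_u)_x\Vert$ is large in the layer, and sharper elementwise bounds on $\eta_u$ do not repair the loss. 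The genuine content of the lemma is precisely the superconvergence treatment of $(\eta_u,s_x)+\sum\left<(\eta_u)^-_{i,y},[[s]]_{i,y}\right>$ via the Gauss--Radau orthogonalities of $\pi^-$, in which only the top Legendre mode in $y$ survives and must be paired and cancelled between the volume and edge contributions. You correctly flag this as ``the real technical heart,'' but the two mechanisms you offer for it --- the new projections and brute-force inverse estimates --- are, respectively, inapplicable in this region and quantitatively insufficient.
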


\section{Supercloseness analysis}\label{S6}
Owing to the consistency of numerical fluxes and the smooth assumptions of $u$, $p$ and $q$, one can obtain
\begin{equation}\label{norm2:3}
	B(\xi;Z)=B(\eta;Z) \quad \forall Z\in \mathcal{V}_N^3.
\end{equation}
Set $Z=\xi$ in \eqref{norm2:3}, in light of the definition of $|||\cdot|||_E$ and the Galerkin orthogonality, we can derive
\begin{equation}\label{norm2:4}
	|||\xi|||_E^2=B(\xi;\xi)=B(\eta;\xi):=\sum_{i=1}^{3}\mathcal{B}_i(\eta;\xi)+\mathcal{B}_4(\eta_u;\xi_u),
\end{equation}
here $\mathcal{B}_i (i=1,2,3,4)$ can be expressed in accordance with \eqref{LDG2:2}.

Refer to \cite[Theorem 4.1]{Cheng1Jiang2:2022-Supercloseness}, we can further rewrite the $x$-direction component of $\mathcal{B}_4(\eta_u;\xi_u)$ into the following form:
\begin{equation*}
\begin{aligned}
		\mathcal{B}_4^x(\eta_u;\xi_u)=&-(a_1 \eta_u,(\xi_u)_x)-\sum_{j=1}^{N}\left\{\sum_{i=1}^N\left<(a_1)_{i,y} (\eta_u)^{-}_{i,y},[[\xi_u]]_{i,y}\right>_{J_j}-\left< \lambda_1 (\eta_u)^{-}_{N,y},(\xi_u)^{-}_{N,y} \right>_{J_j}\right\}\\
		:=&\sum_{i=1}^5\mathcal{B}_{4i}^x(\eta_u;\xi_u),
\end{aligned}
\end{equation*}
where
\begin{equation*}
	\begin{aligned}
		&\begin{aligned}
			\mathcal{B}_{41}^x(\eta_u;\xi_u)=-\sum_{i=1}^{N/2}\sum_{j=1}^{N}(a_1)_{ij}\mathcal{D}_{ij}^1(\eta_u,\xi_u),
		\end{aligned}\\
		&\begin{aligned}
			\mathcal{B}_{42}^x(\eta_u;\xi_u)=&-\sum_{i=1}^{N/2}\sum_{j=1}^{N}\left[\left<(a_1-(a_1)_{ij})\eta_u,(\xi_u)_x\right>_{K_{ij}}+\left<((a_1)_{i-1,y}-(a_1)_{ij})(\eta_u)_{i-1,y}^-,(\xi_u)_{i-1,y}^+\right>_{J_j}\right.\\
			&\left.-\left<((a_1)_{i,y}-(a_1)_{ij})(\eta_u)_{i,y}^-,(\xi_u)_{i,y}^-\right>_{J_j}\right],
		\end{aligned}\\
		&\begin{aligned}
			\mathcal{B}_{43}^x(\eta_u;\xi_u)=-\sum_{i=N/2+1}^{N}\sum_{j=1}^{N}\left<a_1\eta_u,(\xi_u)_x\right>_{K_{ij}}-\sum_{i=N/2+1}^{N}\sum_{j=1}^{N}\left<(a_1)_{i,y}(\eta_u)_{i,y}^-,[[\xi_u]]_{i,y}\right>_{J_j},
		\end{aligned}\\
		&\begin{aligned}
			\mathcal{B}_{44}^x(\eta_u;\xi_u)=-\sum_{j=1}^N(a_1)_{N/2,y}\left<(\eta_u)_{N/2,y}^-,(\xi_u)_{N/2,y}^+\right>_{J_j},
		\end{aligned}\\
		&\begin{aligned}
			\mathcal{B}_{45}^x(\eta_u;\xi_u)=\sum_{j=1}^N\lambda_1\left<(\eta_u)_{N,y}^-,(\xi_u)_{N,y}^-\right>_{J_j}.
		\end{aligned}
	\end{aligned}
\end{equation*}
Here $(a_1)_{ij}$ denotes $a_1(x_i,y_j)$ and $
\mathcal{D}_{ij}^1(\eta_u,\xi_u):=\left<\eta_u,(\xi_u)_x\right>_{K_{ij}}-\left<(\eta_u)_{i,y}^-,(\xi_u)_{i,y}^-\right>_{J_j}+\left<(\eta_u)_{i-1,y}^-,(\xi_u)_{i-1,y}^+\right>_{J_j}$. In the following analysis, we will just analyze $\mathcal{B}_{4}^x(\eta_u;\xi_u)$, for $\mathcal{B}_{4}^y(\eta_u;\xi_u)$ (the $y$-direction component of $\mathcal{B}_4(\eta_u;\xi_u)$) can be analyzed similarly.

The estimate approach for $\mathcal{B}_i(\eta;\xi)\ i=1,2,3$ is akin to the one in \cite[Theorem 4.1]{Cheng1Mei2:2022-Local}, and the proof for $\mathcal{B}_{4i}^x(\eta_u;\xi_u)\ i=1,2,5$ closely resembles that in \cite[Theorem 4.1]{Cheng1Jiang2:2022-Supercloseness}. Consequently, we can easily derive that
\begin{equation}\label{have been given-1}
	|\mathcal{B}_i(\eta;\xi)|\le CN^{-(k+1)}|||\xi|||_E\quad i=1,2,3,
\end{equation}
and 
\begin{equation}\label{main-1}
|\mathcal{B}_{4i}^x(\eta_u;\xi_u)|\le CN^{-(k+1)}|||\xi|||_E\quad i=1,2,5.
\end{equation}

For $\mathcal{B}_{43}^x(\eta_u;\xi_u)$, its supercloseness result was $\mathcal{O}(N^{-(k+1)}\ln(1/\varepsilon)^{1/2})$ in the analysis of \cite{Cheng1Jiang2:2022-Supercloseness}. To eliminate $(\ln(1/\varepsilon))^{1/2}$, we attempt to improve the analysis technique proposed in \cite[Lemma 3.3]{Cheng1Jiang2:2022-Supercloseness} by considering the situation on the interval $[x_{N/2},x_{N/2+1}]$ and $[x_{N/2+1},x_N]$ separately, which can effectively avoid the influence of the layer width $\mathcal{O}(\varepsilon\ln(1/\varepsilon))$. As a result of this enhancement, we successfully attain the optimal convergence rate independent of $\varepsilon$, see Lemma \ref{estimate-B43}.

And for $\mathcal{B}_{44}^x(\eta_u;\xi_u)$, previous methods were unable to avoid $(\ln(1/\varepsilon))^{1/2}$. In this article, we newly introduce two special local Gauss-Radau projections $\pi_x^-$, $\pi_y^-$, and establish a new interpolation $(\mathcal{P}^-u,\mathcal{P}_x^+p,\mathcal{P}_u^+q)$ to further eliminate element boundary errors on the region $[x_{N/2-1},x_{N/2}]\times([0,y_{N/2-1}]\cup[y_{N/2},1])$. Then concentrating the analysis on a specific mesh $K_{N/2,N/2}$, where an optimal supercloseness result independent of $\varepsilon$ can be achieved through the utilization of a skillful technique, as demonstrated in Lemma \ref{estimate-B44}.

\begin{lemma}\label{estimate-B43}
\begin{equation*}
|\mathcal{B}_{43}^x(\eta_u;\xi_u)|\le CN^{-(k+1)}\ln^{1/2}N(|||\xi|||_E+N^{-(k+1)}).
\end{equation*}
\end{lemma}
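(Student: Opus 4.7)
The plan is to decompose $\mathcal{B}_{43}^x(\eta_u;\xi_u)$ into a contribution $T_{\mathrm{strip}}$ from the single transition element $i=N/2+1$ and a contribution $T_{\mathrm{int}}$ from the interior fine-mesh elements $i\ge N/2+2$. The key geometric fact, recorded in Lemma \ref{step}, is that $h_{x,i}\sim\varepsilon$ uniformly for $i\ge N/2+2$, whereas the transition element carries the anomalously large width $h_{x,N/2+1}\le 2\rho N^{-1}$; treating the two regimes separately is what prevents the large transition width from coupling to the large $\partial_x^{k+1}E_{21}$ and producing the $(\ln(1/\varepsilon))^{1/2}$ factor of \cite{Cheng1Jiang2:2022-Supercloseness}.

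On the transition strip, the jump term $\sum_j\langle(a_1)_{N/2+1,y}(\eta_u)_{N/2+1,y}^-,[[\xi_u]]_{N/2+1,y}\rangle_{J_j}$ will be handled by a weighted Cauchy--Schwarz in which the jump $[[\xi_u]]_{N/2+1,y}$ is paired with the weight $h_{x,N/2+2}^{-1}\sim\varepsilon^{-1}$, since this is precisely the weight attached to this jump inside the sum in Lemma \ref{a bound}. The matching $h_{x,N/2+2}^{1/2}\sim\varepsilon^{1/2}$ on the $\eta_u$-side combines with the trace estimate $\sum_j\|(\eta_u)_{N/2+1,y}^-\|_{J_j}^2\le CN^{-2(k+1)}$ of Lemma \ref{interpolation error estimates}, so that the $\varepsilon^{\pm1/2}$ cancel and the strip contributes $CN^{-(k+1)}(|||\xi|||_E+N^{-(k+1)})$ with no logarithm. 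The volume term on the strip is treated analogously: $\|\eta_u\|_{K_{N/2+1,j}}$ is bounded via the anisotropic $L^2$-estimate applied to the decomposition $u=S+E_{21}+E_{12}+E_{22}$ of Assumption \ref{regularity results}, the $\varepsilon^{-(k+1)}$-growth of $\partial_x^{k+1}E_{21}$ being absorbed by the exponential factor $e^{-\alpha_1(1-x_{N/2+1})/\varepsilon}$, which is already $O(N^{-(k+1)})$ thanks to $\rho\ge k+2$.

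On the interior, $h_{x,i}\sim\varepsilon$ allows the anisotropic approximation estimate to cancel the $\varepsilon^{k+1}$ of the mesh width against the $\varepsilon^{-(k+1)}$ growth of $\partial_x^{k+1}E_{21}$, leaving a local factor $e^{-2\alpha_1(1-x_i)/\varepsilon}$ in each squared trace or volume norm. Summing $\sum_{i\ge N/2+2}$ against the weight $h_{x,i+1}$ gives a geometric series whose cumulative cost is $O(\ln N)$; pairing with Lemma \ref{a bound} via Cauchy--Schwarz then yields the advertised $CN^{-(k+1)}\ln^{1/2}N(|||\xi|||_E+N^{-(k+1)})$ for $T_{\mathrm{int}}$.

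The delicate point will be the index bookkeeping at the transition jump: Lemma \ref{a bound} weights $[[\xi_u]]_{i-1,y}$ by $h_{x,i}^{-1}$, so the jump at $x_{N/2+1}$ must be paired with $h_{x,N/2+2}^{-1}\sim\varepsilon^{-1}$ rather than the naively expected $h_{x,N/2+1}^{-1}$ (which can be as small as $N/(2\rho)$ and would be far too crude when $\varepsilon\ll N^{-1}$). Identifying this correct weight, and verifying that the resulting $\varepsilon^{1/2}$-factor on the $\eta_u$-side precisely cancels the $\varepsilon^{-1/2}$ from Lemma \ref{a bound}, is the technical crux of the proof; the remainder is a clean Cauchy--Schwarz plus standard layer-decay estimates that become routine once the two regimes have been separated.
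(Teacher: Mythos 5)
Your splitting into the transition element $i=N/2+1$ and the interior elements $i\ge N/2+2$ is exactly the paper's decomposition $\mathcal{B}_{43}^x=\uppercase\expandafter{\romannumeral1}+\uppercase\expandafter{\romannumeral2}+\uppercase\expandafter{\romannumeral3}+\uppercase\expandafter{\romannumeral4}$, and your weighted Cauchy--Schwarz for the jump at $x_{N/2+1}$ (weight $h_{x,N/2+2}^{-1}$ from Lemma \ref{a bound}) does yield $CN^{-(k+1)}(|||\xi|||_E+N^{-(k+1)})$; note, however, that the paper handles this term more simply, bounding $\sum_j\Vert[[\xi_u]]_{N/2+1,y}\Vert_{J_j}^2$ directly by $|||\xi|||_E^2$ via the jump terms in \eqref{energy norm}, so the index bookkeeping you single out as the ``technical crux'' is in fact a non-issue. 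The genuine gap is the \emph{volume} term on the strip, $\sum_j\left<a_1\eta_u,(\xi_u)_x\right>_{K_{N/2+1,j}}$. You describe how to bound $\Vert\eta_u\Vert_{K_{N/2+1,j}}$ but say only that $(\xi_u)_x$ is ``treated analogously''. If that means pairing with $\Vert(\xi_u)_x\Vert_{[x_{N/2},1]\times[0,1]}\le C\varepsilon^{-1/2}(|||\xi|||_E+N^{-(k+1)})$ from Lemma \ref{a bound}, the estimate does not close: the contributions of $S$ and $E_{12}$ to $\eta_u$ have no exponential smallness in $x$, so the best available bound is $\Vert\eta_u\Vert_{[x_{N/2},x_{N/2+1}]\times[0,1]}\le CN^{-(k+1)}h_{x,N/2+1}^{1/2}\le CN^{-(k+1)}N^{-1/2}$, and $N^{-1/2}\varepsilon^{-1/2}$ is unbounded under the standing assumption $\varepsilon\le N^{-1}$. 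The paper closes this term by a different device entirely: the inverse inequality $\Vert(\xi_u)_x\Vert_{K_{N/2+1,j}}\le Ch_{x,N/2+1}^{-1}\Vert\xi_u\Vert_{K_{N/2+1,j}}$ on the anomalously wide transition cell, with $\Vert\xi_u\Vert$ then absorbed into $|||\xi|||_E$. That step is the real reason the transition element can be split off, and it is absent from your argument.

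Your mechanism for the logarithm on the interior is also wrong. The squared trace and volume norms of $\eta_u$ do \emph{not} carry a surviving factor $e^{-2\alpha_1(1-x_i)/\varepsilon}$, again because $\eta_u$ contains the interpolation errors of $S$ and $E_{12}$, which are uniformly of size $N^{-(k+1)}$ across the fine region; there is no geometric series to sum (and if there were, it would sum to $O(1)$ times its largest term, not to $O(\ln N)$). The $\ln^{1/2}N$ comes from the \emph{measure} of the region: by Lemma \ref{step} the strip $[x_{N/2+1},1]\times[0,1]$ has measure $O(\varepsilon\ln N)$, so $\Vert\eta_u\Vert^2_{[x_{N/2+1},1]\times[0,1]}$ and $\sum_{i=N/2+2}^{N-1}h_{x,i+1}\sum_j\Vert(\eta_u)_{i,y}^-\Vert_{J_j}^2$ are both bounded by $CN^{-2(k+1)}\varepsilon\ln N$, and the factor $\varepsilon$ cancels against the $\varepsilon^{-1}$ coming from the square of Lemma \ref{a bound}, leaving $\ln N$. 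This distinction is not cosmetic: replacing the width $O(\varepsilon\ln(1/\varepsilon))$ of $[x_{N/2},1]$ by the width $O(\varepsilon\ln N)$ of $[x_{N/2+1},1]$ is precisely what eliminates the $(\ln(1/\varepsilon))^{1/2}$, which is the whole point of the lemma; your proposal correctly isolates the transition cell but then attributes the resulting logarithm to a decay mechanism that is not present.
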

\begin{proof}
\begin{equation}\label{B43}
	\begin{aligned}
\mathcal{B}_{43}^x(\eta_u;\xi_u)=&-\sum_{j=1}^{N}\left<a_1\eta_u,(\xi_u)_x\right>_{K_{N/2+1,j}}-\sum_{j=1}^{N}\left<(a_1)_{N/2+1,y}(\eta_u)_{N/2+1,y}^-,[[\xi_u]]_{N/2+1,y}\right>_{J_j}\\
&-\sum_{i=N/2+2}^{N}\sum_{j=1}^{N}\left<a_1\eta_u,(\xi_u)_x\right>_{K_{ij}}-\sum_{i=N/2+2}^{N}\sum_{j=1}^{N}\left<(a_1)_{i,y}(\eta_u)_{i,y}^-,[[\xi_u]]_{i,y}\right>_{J_j}\\
=&\uppercase\expandafter{\romannumeral1}+\uppercase\expandafter{\romannumeral2}+\uppercase\expandafter{\romannumeral3}+\uppercase\expandafter{\romannumeral4}.
\end{aligned}
\end{equation}

Recall H\"{o}lder inequalities, Lemma \ref{interpolation error estimates} and inverse inequality, we have
\begin{equation}\label{B43-1}
\begin{aligned}
	|\uppercase\expandafter{\romannumeral1}|\le&C\sum_{j=1}^N\Vert(\eta_u)\Vert_{K_{N/2+1,j}}\Vert(\xi_u)_x\Vert_{K_{N/2+1,j}}\\
	\le&C\sum_{j=1}^N N^{-(k+1)}h_{x,N/2+1}^{1/2}h_{y,j}^{1/2}h_{x,N/2+1}^{-1/2}\Vert\xi_u\Vert_{K_{N/2+1,j}}\\
	\le&CN^{-(k+1)}|||\xi|||_E.
\end{aligned}
\end{equation}

H\"{o}lder inequalities, Lemma \ref{interpolation error estimates} and \eqref{energy norm} give
\begin{equation}\label{B43-2}
\begin{aligned}
	|\uppercase\expandafter{\romannumeral2}|\le(\sum_{j=1}^N\Vert(\eta_u)_{N/2+1,y}^-\Vert_{J_j}^2)^{1/2}(\sum_{j=1}^N\Vert[[\xi_u]]_{N/2+1,y}\Vert_{J_j}^2)^{1/2}\le CN^{-(k+1)}|||\xi|||_E.
\end{aligned}
\end{equation}

H\"{o}lder inequalities, \eqref{energy norm}, Lemmas \ref{step}, \ref{interpolation error estimates} and \ref{a bound} yield
\begin{equation}\label{B43-3,4}
\begin{aligned}
|\uppercase\expandafter{\romannumeral3}+\uppercase\expandafter{\romannumeral4}|\le&C\Vert\eta_u\Vert_{[x_{N/2+1},1]\cup[0,1]}\Vert(\xi_u)_x\Vert_{[x_{N/2+1},1]\cup[0,1]}\\
&+C\left(\sum_{i=N/2+2}^{N-1}\sum_{j=1}^N h_{x,i+1}\Vert(\eta_u)_{i,y}^-\Vert_{J_j}^2\right)^{1/2}\left(\sum_{i=N/2+2}^{N-1}\sum_{j=1}^N h_{x,i+1}^{-1}\Vert[[\xi_u]]_{i,y}\Vert_{J_j}^2\right)^{1/2}\\
&+C\left(\sum_{j=1}^N\Vert(\eta_u)_{N,y}^-\Vert_{J_j}^2\right)^{1/2}\left(\sum_{j=1}^N\Vert[[\xi_u]]_{N,y}\Vert_{J_j}^2\right)^{1/2}\\
\le&C\left(\Vert\eta_u\Vert_{[x_{N/2+1},1]\cup[0,1]}^2+\sum_{i=N/2+2}^{N-1}\sum_{j=1}^N h_{x,i+1}\Vert(\eta_u)_{i,y}^-\Vert_{J_j}^2\right)^{1/2}\\
&\cdot\left(\Vert(\xi_u)_x\Vert_{[x_{N/2+1},1]\cup[0,1]}^2+\sum_{i=N/2+2}^{N-1}\sum_{j=1}^N h_{x,i+1}^{-1}\Vert[[\xi_u]]_{i,y}\Vert_{J_j}^2\right)^{1/2}+CN^{-(k+1)}|||\xi|||_E\\
\le&C\varepsilon^{-1/2}\left(\Vert\eta_u\Vert_{[x_{N/2+1},1]\cup[0,1]}^2+\sum_{i=N/2+2}^{N-1}\sum_{j=1}^N h_{x,i+1}\Vert(\eta_u)_{i,y}^-\Vert_{J_j}^2\right)^{1/2}(|||\xi|||_E+N^{-(k+1)})\\
&+CN^{-(k+1)}|||\xi|||_E\\
\le&CN^{-(k+1)}\ln^{1/2}N(|||\xi|||_E+N^{-(k+1)}).
\end{aligned}
\end{equation}

Substitute \eqref{B43-1}, \eqref{B43-2} and \eqref{B43-3,4} into \eqref{B43}, we can prove Lemma \ref{estimate-B43}.
\end{proof}

\begin{lemma}\label{estimate-B44}
\begin{equation*}
|\mathcal{B}_{44}^x(\eta_u;\xi_u)|\le CN^{-(k+1)}|||\xi|||_E.
\end{equation*}
\end{lemma}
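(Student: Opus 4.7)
The plan is to split $\mathcal{B}_{44}^x(\eta_u;\xi_u)$ into the single term at $j=N/2$ and the sum over $j\neq N/2$. Both parts share a common goal: to avoid the factor $h_{x,N/2+1}^{-1/2}\sim\varepsilon^{-1/2}$ that would otherwise arise from a trace or inverse inequality on the thin element $K_{N/2+1,j}$.

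For $j\neq N/2$, the key ingredient is the new interpolation: on $K_{N/2,j}$ we have $\mathcal{P}^-u=\pi_x^-u$, so the face orthogonality of $\pi_x^-$ yields $\langle(\eta_u)_{N/2,y}^-,v\rangle_{J_j}=0$ for every $v\in\mathcal{P}^k(J_j)$. Since $(\xi_u)_{N/2,y}^+$ is the trace from $K_{N/2+1,j}$ of a $\mathbb{Q}^k$ function, it belongs to $\mathcal{P}^k(J_j)$, and subtracting the constant $a_1(x_{N/2},y_j)$ from $(a_1)_{N/2,y}$ produces the extra factor $h_{y,j}$:
\begin{equation*}
|\langle(a_1)_{N/2,y}(\eta_u)_{N/2,y}^-,(\xi_u)_{N/2,y}^+\rangle_{J_j}|\le Ch_{y,j}\,\|(\eta_u)_{N/2,y}^-\|_{J_j}\,\|(\xi_u)_{N/2,y}^+\|_{J_j}.
\end{equation*}
To estimate $(\xi_u)_{N/2,y}^+$ without touching the thin element, I would write $(\xi_u)_{N/2,y}^+=(\xi_u)_{N/2,y}^-+[[\xi_u]]_{N/2,y}$, apply the inverse inequality to the left trace on the coarse neighbour $K_{N/2,j}$ (so $h_{x,N/2}^{-1/2}\le CN^{1/2}$), and bound the jump sum by the energy norm. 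Together with $h_{y,j}\le CN^{-1}$, Lemma \ref{interpolation error estimates}, and Cauchy--Schwarz in $j$, this piece produces $CN^{-(k+3/2)}|||\xi|||_E$, which is more than sufficient.

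For $j=N/2$ no face orthogonality is available, because on $K_{N/2,N/2}$ the new interpolation reduces to $\pi^-u$, which is only orthogonal to $\mathcal{P}^{k-1}$ on the face. Instead, the doubly coarse geometry $h_{x,N/2},h_{y,N/2}\ge N^{-1}$ takes over: H\"{o}lder's inequality, the edge bound $\|(\eta_u)_{N/2,y}^-\|_{J_{N/2}}\le CN^{-(k+3/2)}$ from Lemma \ref{interpolation error estimates}, and the same jump decomposition of $(\xi_u)_{N/2,y}^+$ (with the inverse inequality now applied on $K_{N/2,N/2}$) combine to give the desired $CN^{-(k+1)}|||\xi|||_E$.

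The main obstacle is the $j\neq N/2$ portion: if one simply applied the inverse inequality on $K_{N/2+1,j}$, the factor $h_{x,N/2+1}^{-1/2}\sim\varepsilon^{-1/2}$ would enter and reintroduce exactly the $(\ln(1/\varepsilon))^{1/2}$ loss of earlier analyses. The rescue is the double device of (i) gaining $h_{y,j}$ through $\pi_x^-$-orthogonality after subtracting a constant in $y$ from $(a_1)_{N/2,y}$, and (ii) rerouting the right trace $(\xi_u)_{N/2,y}^+$ through the left trace and the jump so that every inverse inequality is applied on an element of width $\ge N^{-1}$.
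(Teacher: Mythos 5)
Your proof follows essentially the same route as the paper's: split off $j=N/2$, annihilate the remaining edge terms via the face orthogonality of $\pi_x^-$ built into the new interpolation (using that $(\xi_u)_{N/2,y}^+\in\mathcal{P}^k(J_j)$), and for $j=N/2$ write $(\xi_u)_{N/2,y}^+=[[\xi_u]]_{N/2,y}+(\xi_u)_{N/2,y}^-$, bounding the jump by the energy norm and the left trace by a trace/inverse inequality on the coarse element $K_{N/2,N/2}$, exactly as in Lemma \ref{estimate-B44}. The only deviation is that the paper asserts the $j\ne N/2$ terms vanish identically, whereas you additionally subtract $a_1(x_{N/2},y_j)$ to handle the non-polynomial weight $(a_1)_{N/2,y}$ and keep a harmless $O(N^{-(k+3/2)})|||\xi|||_E$ remainder; this is a minor (and arguably more careful) refinement rather than a different approach.
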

\begin{proof}
Consider the novel interpolation \eqref{new interpolation} and the local Gauss-Radau projections $\pi^-$, $\pi_x^-$, one has
\begin{align*}
\mathcal{B}_{44}^x(\eta_u;\xi_u)=&-\sum_{j=1}^N(a_1)_{N/2,y}\left<(\eta_u)_{N/2,y}^-,(\xi_u)_{N/2,y}^+\right>_{J_j}\\
=&-(\sum_{j=1}^{N/2-1}+\sum_{j=N/2+1}^{N})(a_1)_{N/2,y}\left<(\eta_u)_{N/2,y}^-,(\xi_u)_{N/2,y}^+\right>_{J_j}\\ &-(a_1)_{N/2,y}\left<(\eta_u)_{N/2,y}^-,(\xi_u)_{N/2,y}^+\right>_{J_{N/2}}\\
=&-(a_1)_{N/2,y}\left<(\eta_u)_{N/2,y}^-,(\xi_u)_{N/2,y}^+\right>_{J_{N/2}}.
\end{align*}

To get an accurate estimate for $-(a_1)_{N/2,y}\left<(\eta_u)_{N/2,y}^-,(\xi_u)_{N/2,y}^+\right>_{J_{N/2}}$, we adopt a skillful technique by decomposing it into the following two terms:
\begin{align*}
&-(a_1)_{N/2,y}\left<(\eta_u)_{N/2,y}^-,(\xi_u)_{N/2,y}^+\right>_{J_{N/2}}\\
=&-(a_1)_{N/2,y}\left<(\eta_u)_{N/2,y}^-,[[\xi_u]]_{N/2,y}\right>_{J_{N/2}}-(a_1)_{N/2,y}\left<(\eta_u)_{N/2,y}^-,(\xi_u)_{N/2,y}^-\right>_{J_{N/2}},
\end{align*}
where from H\"{o}lder inequalities, Lemma \ref{interpolation error estimates} and \eqref{energy norm}, we can derive
\begin{equation*}
|-(a_1)_{N/2,y}\left<(\eta_u)_{N/2,y}^-,[[\xi_u]]_{N/2,y}\right>_{J_{N/2}}|\le CN^{-(k+3/2)}|||\xi|||_E,
\end{equation*}
and by H\"{o}lder inequalities, Lemma \ref{interpolation error estimates}, trace inequality and \eqref{energy norm}, one can obtain
\begin{equation*}
\begin{aligned}
	|-(a_1)_{N/2,y}\left<(\eta_u)_{N/2,y}^-,(\xi_u)_{N/2,y}^-\right>_{J_{N/2}}|\le &CN^{-(k+3/2)} N^{1/2}\Vert\xi_u\Vert_{K_{N/2,N/2}}\\
	\le& CN^{-(k+1)}|||\xi|||_E.
\end{aligned}
\end{equation*}

Thus we have proven Lemma \ref{estimate-B44}.
\end{proof}

At this point, we will present the main conclusion of this paper.
\begin{theorem}\label{theorem}
	Let $w = (u, p, q)=(u, \varepsilon u_x, \varepsilon u_y)$, where $u$ is the exact solution of problem \eqref{eq1.1}. Define the interpolation of $w$ as $\pi w=(P^{-}u,P_{x}^{+}p,P_{y}^{+}q)\in \mathcal{V}_N^3$, and the LDG solution as $W = (U, P, Q) \in \mathcal{V}_N^3$. Then we can establish the following supercloseness result:
	\begin{align*}
		|||\pi w-W|||_{E}\le  C N^{-(k+1)}\ln^{1/2}N.
	\end{align*}
\end{theorem}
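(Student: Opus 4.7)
The plan is straightforward bookkeeping, since all the difficult estimates have already been isolated in the preceding lemmas. Starting from the Galerkin orthogonality identity \eqref{norm2:4}, we have $|||\xi|||_E^2 = \sum_{i=1}^{3}\mathcal{B}_i(\eta;\xi) + \mathcal{B}_4^x(\eta_u;\xi_u) + \mathcal{B}_4^y(\eta_u;\xi_u)$, with each convective piece further decomposed as $\mathcal{B}_4^x = \sum_{i=1}^{5}\mathcal{B}_{4i}^x$ and analogously in $y$. The task reduces to bounding every summand and then absorbing $|||\xi|||_E$ back into the left-hand side.

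First, I invoke the inherited bounds \eqref{have been given-1} for $\mathcal{B}_1,\mathcal{B}_2,\mathcal{B}_3$ and \eqref{main-1} for $\mathcal{B}_{41}^x,\mathcal{B}_{42}^x,\mathcal{B}_{45}^x$; each of these contributes at most $CN^{-(k+1)}|||\xi|||_E$. Then Lemma \ref{estimate-B43} handles $\mathcal{B}_{43}^x$, which is the only piece that carries the extra $\ln^{1/2}N$ factor, and Lemma \ref{estimate-B44} handles $\mathcal{B}_{44}^x$, which is clean thanks to the new interpolation design. The $y$-direction analogues contribute the same bounds by the symmetry emphasized at the start of Section \ref{S6}. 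Summing everything yields an inequality of the form
\begin{equation*}
|||\xi|||_E^2 \le CN^{-(k+1)}\ln^{1/2}N\bigl(|||\xi|||_E + N^{-(k+1)}\bigr).
\end{equation*}

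To finish, I apply Young's inequality $ab \le \tfrac{1}{2}a^2 + \tfrac{1}{2}b^2$ on the right-hand side, absorbing $\tfrac{1}{2}|||\xi|||_E^2$ into the left, and the remaining constant term controls $|||\xi|||_E^2$ by $CN^{-2(k+1)}\ln N$. Taking square roots gives $|||\xi|||_E \le CN^{-(k+1)}\ln^{1/2}N$, and since $\pi w - W = -\xi$ and the energy functional is a norm, the stated bound on $|||\pi w - W|||_E$ follows.

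I do not expect any genuine obstacle in this final synthesis step: the real difficulty has already been absorbed upstream. Specifically, the $\ln^{1/2}N$ factor is forced by the strip $[x_{N/2+1},x_N]$ treated in Lemma \ref{estimate-B43}, while the potential $\ln(1/\varepsilon)^{1/2}$ blow-up at the transition interface $\{x_{N/2}\}\times(0,1)$ is neutralized in Lemma \ref{estimate-B44} by the tailor-made projections $\pi_x^-,\pi_y^-$ and the piecewise interpolation $\mathcal{P}^-u$ defined in \eqref{new interpolation}, which kill $(\eta_u)^-_{N/2,y}$ on every slab $J_j$ with $j \ne N/2$ and leave only the single corner element $K_{N/2,N/2}$ to be dispatched by a trace-plus-inverse argument. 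Once those two lemmas are in place, the theorem is assembled in a few lines.
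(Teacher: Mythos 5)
Your proposal is correct and follows essentially the same route as the paper: the same decomposition via \eqref{norm2:4}, the same invocation of \eqref{have been given-1}, \eqref{main-1}, Lemmas \ref{estimate-B43} and \ref{estimate-B44} together with the $y$-direction analogue, leading to the quadratic inequality $|||\xi|||_E^2 \le CN^{-(k+1)}\ln^{1/2}N(|||\xi|||_E+N^{-(k+1)})$. Your explicit Young's-inequality absorption merely spells out the step the paper compresses into ``i.e.'', so there is nothing further to add.
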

\begin{proof}
According to \eqref{main-1}, Lemmas \ref{estimate-B43} and \ref{estimate-B44}, one has
\begin{equation*}
|\mathcal{B}_4^x(\eta_u;\xi_u)|\le CN^{-(k+1)}\ln^{1/2}N(|||\xi|||_E+N^{-(k+1)}).
\end{equation*}
We can estimate $\mathcal{B}_4^y(\eta_u;\xi_u)$ analogously and consequently, one obtains
\begin{equation}\label{main-2}
|\mathcal{B}_4(\eta_u;\xi_u)|\le CN^{-(k+1)}\ln^{1/2}N(|||\xi|||_E+N^{-(k+1)}).
\end{equation}

\eqref{main-2}, combined with \eqref{have been given-1} gives
\begin{equation*}
|||\xi|||_E^2=B(\eta;\xi)=\sum_{i=1}^{3}\mathcal{B}_i(\eta;\xi)+\mathcal{B}_4(\eta_u;\xi_u)\le CN^{-(k+1)}\ln^{1/2}N(|||\xi|||_E+N^{-(k+1)}),
\end{equation*}
i.e.
\begin{equation*}
|||\xi|||_E=|||\pi w-W|||_E\le CN^{-(k+1)}\ln^{1/2}N.
\end{equation*}

Thus we have done.
\end{proof}
\begin{remark}
With further derivation, we can obtain the $L^2$ error estimate:
\begin{equation*}
|||w-W|||_2\le CN^{-(k+1)}\ln^{1/2}N.
\end{equation*}	
\begin{proof}
Triangle inequality yields
\begin{equation*}
|||w-W|||_2=|||\eta-\xi|||_2\le|||\eta|||_2+|||\xi|||_2,
\end{equation*}
where from \eqref{energy norm} and Lemma \ref{interpolation error estimates}, one can get
\begin{equation*}
|||\eta|||_2\le C(\Vert\eta_u\Vert+\varepsilon^{-1/2}\Vert\eta_p\Vert+\varepsilon^{-1/2}\Vert\eta_q\Vert)\le CN^{-(k+1)},
\end{equation*} 
and from \eqref{energy norm} and Theorem \ref{theorem}, one has
\begin{equation*}
	|||\xi|||_2\le|||\xi|||_E\le CN^{-(k+1)}\ln^{1/2}N.
\end{equation*}
\end{proof}
\end{remark}

\begin{remark}
The primary emphasis of this article is on the level of numerical analysis, while the corresponding algorithm is consistent with the one employed in \cite{Cheng1Jiang2:2022-Supercloseness} for Bakhvalov-type mesh. Hence, we omit numerical experiments in this article, and recommend consulting \cite[Section 5]{Cheng1Jiang2:2022-Supercloseness} for relevant numerical experiments, which can also support our main conclusion, i.e., Theorem \ref{theorem}.
\end{remark}

\section{Conflict of interest statement}
We declare that we have no conflict of interest.

%

\bibliographystyle{plain}

%
%


\end{document}